\documentclass{article} 
\usepackage{mathmag_copy}  
\makeatletter
\def\ps@headings{%
  \def\@oddhead{}%
  \def\@evenhead{}%
  \def\@oddfoot{\hfill\thepage\hfill}%
  \def\@evenfoot{\hfill\thepage\hfill}%
}
\makeatother
\usepackage{cite}
\usepackage{comment}
\usepackage{cite} 

\theoremstyle{theorem}
\newtheorem{theorem}{Theorem}
\newtheorem{proposition}{Proposition}

\theoremstyle{definition}
\newtheorem*{definition}{Definition}

\newtheorem{example}{Example}
\newtheorem{conjecture}{Conjecture}

\allowdisplaybreaks
\author{David McCune\\               
\scriptsize Division of Analytical Sciences\\      
William Jewell College\\                  
\url{mccuned@william.jewell.edu}\\                       
}                        

\makeatletter
\@addtoreset{footnote}{page}
\makeatother

\begin{document}

\title{A Geometric View of Some Apportionment Paradoxes, Revisited}

\maketitle

\noindent  One of my favorite \emph{Mathematics Magazine} articles is ``A Geometric View of Some Apportionment Paradoxes'' by Brent Bradberry, published in 1992. I love the exposition, and the marriage of geometry and apportionment theory. I admit I am biased because I have done some research in apportionment theory, but I think most readers would appreciate Bradberry's exploration of apportionment paradoxes using simplicial geometry.

The paper does have one glaring flaw, however: its most substantive mathematical result is wrong. In Theorem 2 of \cite{Bradberry}, Bradberry provides an incorrect formula for the probability that the Alabama paradox occurs under Hamilton's apportionment method in the three-state case. In this article, we prove the correct result and investigate the origin of Bradberry's mistake, using this analysis as a springboard to explore the Alabama paradox more broadly.

\section{The Apportionment Problem}

Apportionment theory is one of the few branches of mathematics that can trace its origin to the US Constitution. Article I, Section 2 states, ``Representatives and direct taxes shall be apportioned among the several states
which may be included in this Union, according to their representative number.'' This is an 18th-century way of saying that if a state has $p\%$ of the country's population then the state should have $p\%$ of the seats in the US House of Representatives. For example, as of this writing the US has approximately 342.6 million residents and my birth state of Texas has approximately 31.7 million residents, accounting for 9.3\% of the population. There are 435 seats in the US House, and thus Texas is due 40.46 seats. Each seat represents a human representative from Texas, and humans famously can't be fractional. The \emph{apportionment problem} is figuring out how to distribute whole seats when states are entitled to fractional amounts.

When I first present the apportionment problem to my students, I'm met with raised eyebrows and confusion tinged with a bit of pity. At first glance, there does not seem to be enough of a ``problem'' to justify a field of mathematics, because every 5th grader knows how to transform decimals into whole numbers. My students' reaction is something along the lines of: ``Has this professional mathematician not heard of rounding to the nearest whole number?''

To see why nearest integer rounding does not solve the problem, suppose a country with 10 seats in its House has three small states with populations 4400, 3400, and 2200. Then the states are due 4.4, 3.4, and 2.2 seats in the House, respectively, and standard rounding would allocate only 9 seats. We would then need to decide what to do with the one leftover seat. One work-around solution would be to change the size of the House to a number where the rounding works out, but for many reasons the size of the House is fixed in apportionment theory. In the US, for example, the House size is fixed at 435 by law, and changing this value at every Census to satisfy standard rounding does not seem desirable.

\section{Hamilton's Method and the Alabama Paradox}

The first known proposed solution to the apportionment problem is due to Alexander Hamilton in 1792, during the debate over the  first  apportionment in the US after the 1790 census. To define his method, we first establish our notation for the paper.

Suppose we have $S$ states with $H$ seats to apportion, and each state's proportion of the country's population is $p_i$ so that $\sum_{i=1}^S p_i =1$. Each state's \emph{quota}  is $q_i=p_i\cdot H$, where  $\sum_{i=1}^S q_i =H$. An \emph{apportionment} is a vector $\mathbf{a}=(a_1, \dots, a_S)$ of non-negative integers satisfying $\sum_{i=1}^S a_i=H$. Informally, an \emph{apportionment method} is a function whose input is a quota vector $\mathbf{q}=(q_1, \dots, q_S)$ and whose output is an apportionment. Equivalently, the input to an apportionment method could be a proportion vector $\mathbf{p}=(p_1, \dots, p_S)$ if we also input $H$. 

Alexander Hamilton proposed the following method for apportioning seats. Calculate each state's quota, then give each state $\lfloor q_i \rfloor$ seats. Unless all quotas are integers there is a nonzero number $k$ of remaining seats; allocate one seat each to the $k$ states whose fractional parts of their quotas are largest.

\begin{example}\label{first_ex}
Suppose a country has three states $A$, $B$, and $C$ with populations 5000, 3000, and 1000. Suppose $H=4$. Then the quotas are $(5000/9000)\cdot 4=2.22$, $(3000/9000)\cdot 4=1.33$, and $(1000/9000)\cdot 4=0.44$, respectively. Hamilton's method initially allocates 2, 1, and 0 seats to $A$, $B$, and $C$, leaving one seat unallocated. Since  $C$ has the largest fractional part of 0.44, this state receives the one extra seat, and Hamilton apportions 2 seats to $A$ and 1 each to $B$ and $C$ for an apportionment vector of $(2,1,1)$. 
\end{example}

What happens if we increase the number of seats from four to five in the previous example? The quotas become 2.78, 1.67, and 0.56, respectively. As before, Hamilton's method initially allocates two seats to $A$, one to $B$, and none to $C$. The two remaining seats go to $A$ and $B$, which have the largest fractional parts. Thus, the resulting apportionment is $(3,2,0)$. This is a surprising outcome: the populations of the three states have not changed, and the country has increased the total number of seats available for apportionment. We might reasonably expect that increasing the number of seats could only benefit the states, or at least leave their allocations unchanged. That is, an increase of House size should be similar to a rising tide lifting all boats. Instead, state $C$ loses its only seat. This phenomenon is known as the \emph{Alabama paradox}, named for the discovery in 1880 that Hamilton's method would award Alabama eight seats in a House of size 299, but only seven seats in a House of size 300. For a well-written history of apportionment and paradoxes in the US context, see the classic book \cite{BalinskiYoung}.

\begin{definition}
We say that a proportion vector $\mathbf{p}$
demonstrates the \emph{Alabama paradox} for
House size $H$ if some state receives fewer
seats under Hamilton's method with House
size $H+1$ than with House size $H$.
\end{definition}

This definition assumes $H$ increases by only one seat; we analyze the case of increasing $H$ by more than one later in the article.

\section{Geometry of Apportionment}

The above example shows that the Alabama paradox can occur, but it tells us little about how common the phenomenon is. For a fixed House size, which population distributions give rise to the paradox? What proportion of all possible population distributions exhibit this behavior? As Bradberry demonstrates, these questions can be investigated geometrically.

Following Bradberry \cite{Bradberry}, we restrict our attention to $S=3$. For a fixed House size $H$, the set of possible quota vectors is
\[
\Sigma^H=\{(q_1,q_2,q_3)\in\mathbb{R}_{\geq 0}^3:q_1+q_2+q_3=H\}.
\]
Geometrically, $\Sigma^H$ is an equilateral triangle with vertices $(H,0,0)$, $(0,H,0)$, and $(0,0,H)$. Each possible apportionment $\mathbf{a}=(a_1,a_2,a_3)$ corresponds to an integer lattice point in this triangle. The \textbf{apportionment region of} $\mathbf{a}$ is the set of quota vectors for which Hamilton's method produces the apportionment $\mathbf{a}$. 

Bradberry observes that Hamilton's method has a natural geometric interpretation: for a given quota vector $\mathbf{q}$, simply choose the apportionment vector $\mathbf{a}$ closest to $\mathbf{q}$ under the standard Euclidean metric. This gives an equivalent geometric definition of Hamilton's method. It also explains the shape of the apportionment regions. Bradberry shows that each region consists of the points closer to one apportionment vector than to any other, so the regions are regular hexagons centered at their corresponding lattice points, truncated by the edges of the simplex when necessary. In particular, apportionment vectors with all positive entries have regions that are full hexagons, while those with exactly one zero entry correspond to half-hexagons. The three corners of the simplex correspond to one-sixth slices of hexagons. The left panel of Figure \ref{figure:first_HAM} illustrates this hexagonal tiling for $H=4$. The boundaries between adjacent regions correspond to quota vectors for which Hamilton's method produces a ``tie,'' with two possible apportionments. For example, $(2.2, 0.4, 1.4)$ is a point on the intersection of the boundaries of the regions corresponding to $(2, 0, 2)$ and $(2,1,1)$. Thus, the apportionment is uniquely determined except on these boundaries.  

To study the Alabama paradox, we compare apportionment diagrams for different House sizes. Since the quotas depend on $H$, we scale each diagram to the unit simplex
\[
\Delta^2=\{(p_1,p_2,p_3)\in\mathbb{R}_{\geq 0}^3: p_1+p_2+p_3=1\}.
\]
Each point in this simplex represents a fixed population distribution, allowing us to compare the apportionments produced by Hamilton's method for different House sizes. By overlaying the diagrams for $H$ and $H+1$, we can identify the regions where increasing the House size causes a state to lose a seat. These are precisely the regions exhibiting the Alabama paradox. The right panel of Figure \ref{figure:first_HAM} illustrates this phenomenon for $H=4$, with the paradoxical regions shaded. Bradberry presents this same figure in his analysis of the paradox. Note that the population proportion vector $(5/9, 3/9, 1/9)$ from Example \ref{first_ex} lies in the lower left shaded region of the figure. The geometry lets us see that when $H=4$ this point lies in the apportionment region for $(2, 1, 1)$ but when $H=5$ it lies in the region for $(3,2,0)$. We refer to a shaded region as an \emph{overlap region}.

For larger values of $H$, the apportionment diagrams become much larger, making it useful to restrict  to a smaller portion of the simplex for some of our visualizations. Fortunately, the diagrams are symmetric under permutations of the three states. In particular, the six possible orderings of $p_1$, $p_2$, and $p_3$ divide the simplex into six congruent regions, each of which can be obtained from the others by reflection. Thus, we can restrict our attention to the region where $p_1\ge p_2\ge p_3$ (the lower-left sixth of the full diagram) without losing any information about the geometry of the paradox. Figure \ref{figure_AL_10} illustrates this portion of the simplex for $H=10$, with the $H=11$ diagram overlaid and the paradoxical regions shaded.

\begin{figure}
\begin{tabular}{cc}
\includegraphics[width=0.5\textwidth]{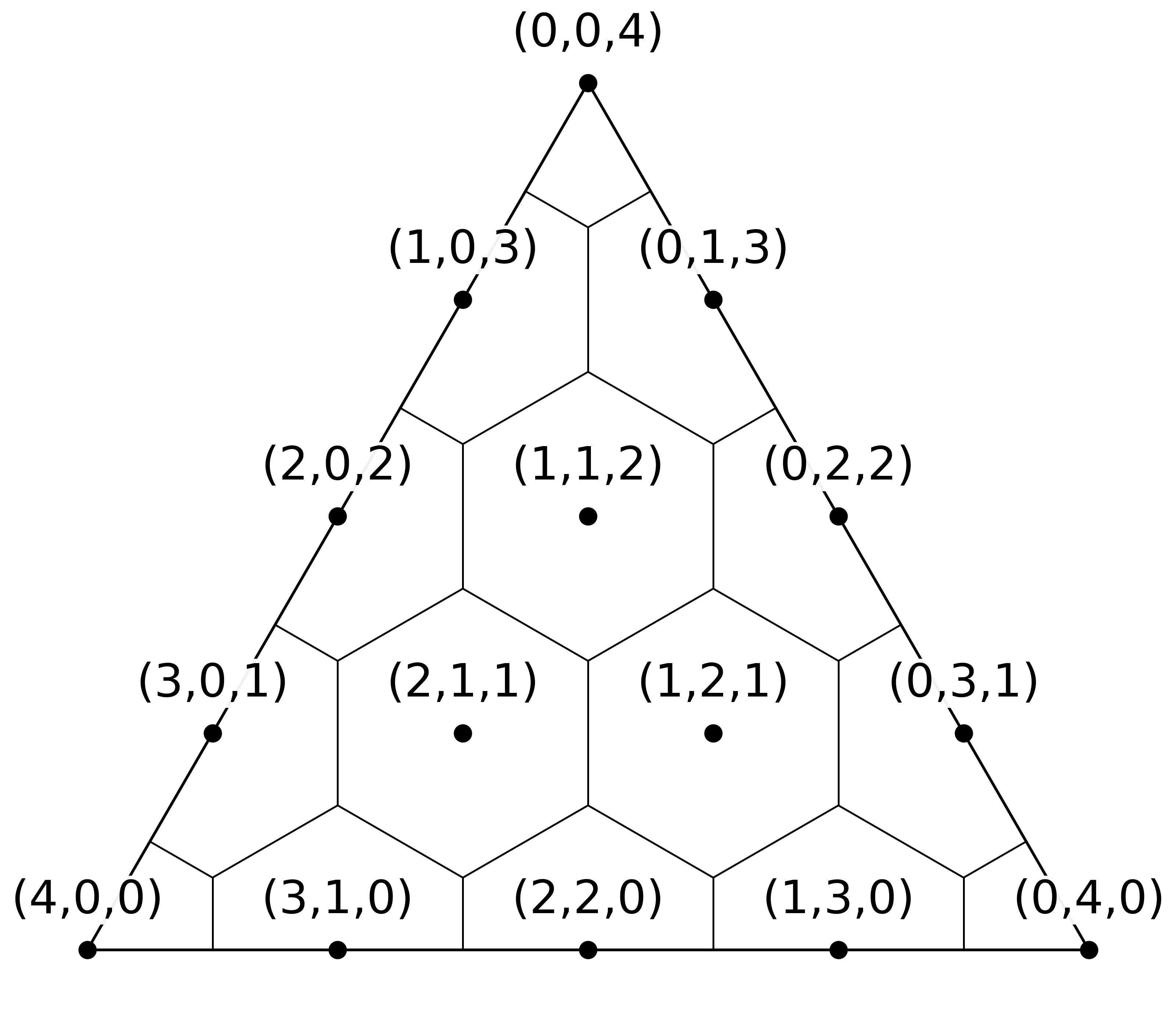}&\includegraphics[width=0.5\textwidth]{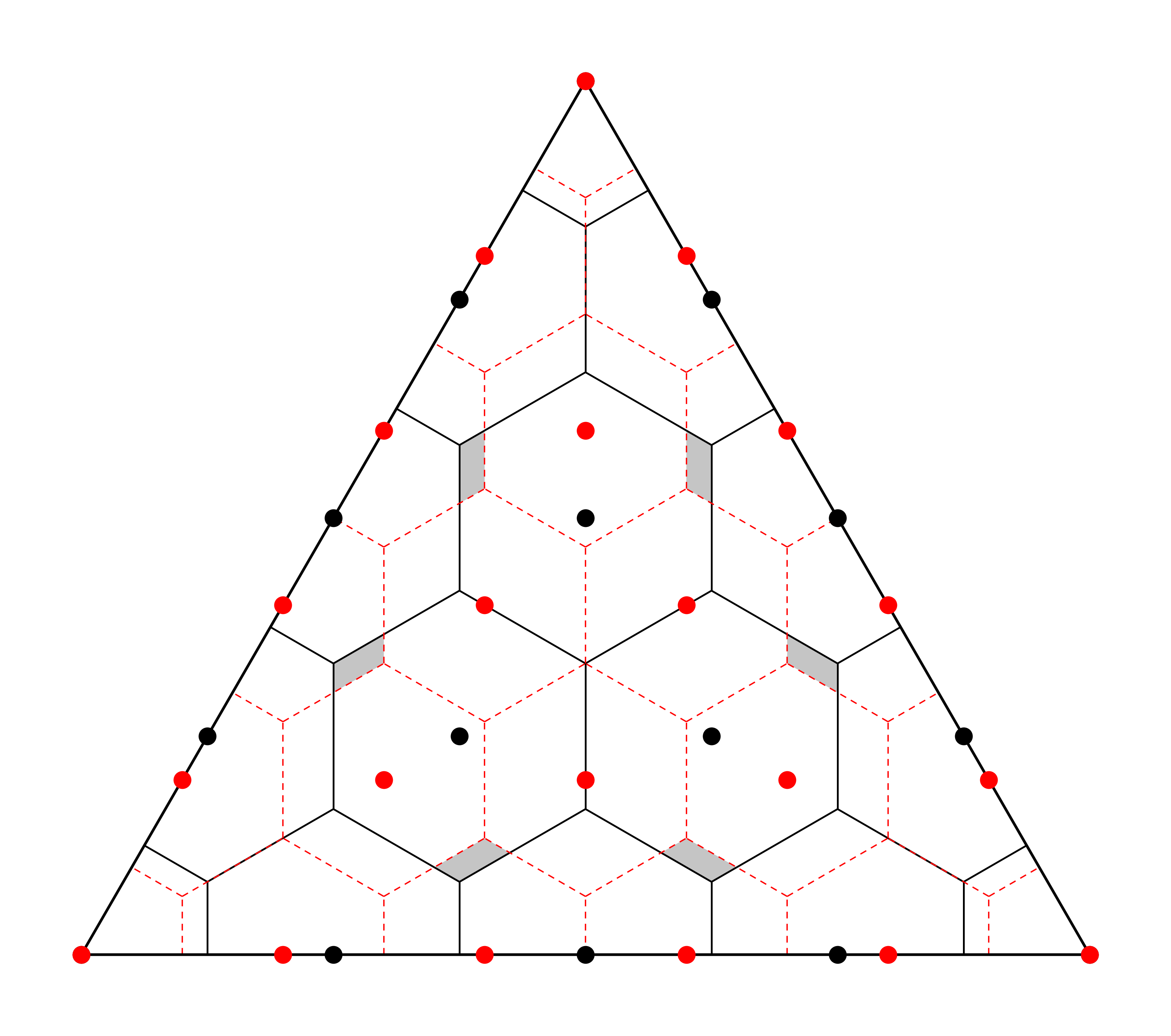}
\end{tabular}
\caption{(Left) The Hamilton apportionment diagram for $H=4$. (Right) The apportionment diagrams for $H=4$ (solid black lines) and $H=5$ (red dashed lines), scaled to the unit simplex $\Delta^2$. The shaded regions correspond to population distributions exhibiting the Alabama paradox when the House size increases from $4$ to $5$. This figure appears in Bradberry's paper \cite{Bradberry}.}
\label{figure:first_HAM}
\end{figure}

\begin{figure}
    \centering\includegraphics[width=0.85\textwidth]{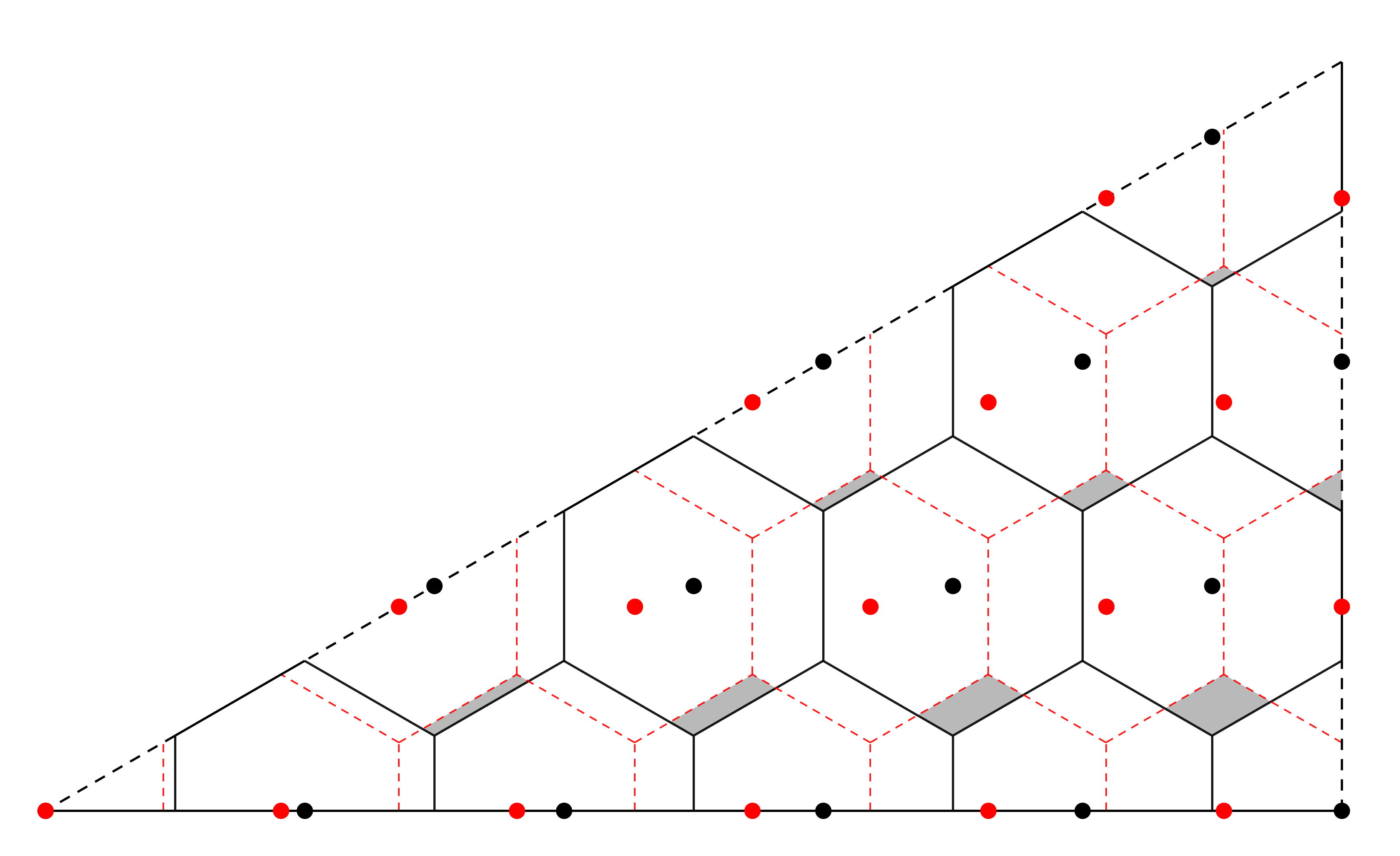}
    \caption{The part of the Hamilton apportionment diagram for $H=10$ (solid black lines) where $p_1\ge p_2 \ge p_3$. The red dashed lines correspond to the $H=11$ diagram, and the shaded regions are the points which demonstrate the Alabama paradox.}
    \label{figure_AL_10}
\end{figure}

\section{Probability of the Alabama Paradox}

What is the probability that the Alabama paradox occurs? To answer this question, we must first specify a probability distribution on the set of possible population proportions. Bradberry assumes a uniform distribution on the simplex $\Delta^2$, so that the probability of the paradox is simply the proportion of the simplex's area corresponding to overlap regions. This probability distribution is often termed the ``impartial anonymous culture model'' and is widely used in apportionment theory or voting theory. We let $p(H)$ denote the probability that a proportion vector demonstrates the Alabama paradox for $H$ seats under Hamilton's method when $S=3$, assuming a uniform distribution on the simplex.

In Theorem 2 of \cite{Bradberry}, Bradberry states:

\begin{quote}
Under the assumption that populations are uniformly distributed over
the apportionment diagram, in the apportionment by Hamilton's method of
$H$ seats among three states, the Alabama paradox has probability of occurrence
\begin{equation}\label{eq:probability}
p(H)=\frac{(H-1)(H-2)}{12H(H+1)}.
\end{equation}
\end{quote}

Bradberry uses the geometry of the Hamilton simplex to generate his formula, which is incorrect. For a fixed $H$, he calculates the areas of all  overlap regions, sums the areas, and divides this sum by the area of the simplex to obtain a probability.

How did I know Equation \ref{eq:probability} was incorrect? While working on the book \cite{JonesMcCuneWilson_book}, I ran Monte Carlo simulations to estimate the probabilities of various apportionment paradoxes in the three-state case. I was particularly interested in the limiting behavior as $H\rightarrow\infty$, so I ran simulations with very large values of $H$. For $H=10,000$, my simulations suggested an Alabama paradox probability of $1/36$, rather than the $1/12$ predicted by Bradberry's formula.  I assumed there was a bug in my code, and I spent hours trying to figure out where I had gone wrong. Eventually, I came across \cite{JansonLinusson}, where the authors prove that $\displaystyle\lim_{H\rightarrow\infty}p(H)=1/36$. My code was vindicated, and I knew Bradberry's formula was incorrect. The work in \cite{JansonLinusson} does not rely on explicit formulas for a fixed $H$, and thus the authors do not determine what Bradberry's formula should be.

Bradberry's geometric approach is sound, and simplicial geometry has been used to study apportionment methods from various angles in the literature. For example, the apportionment simplex has been used to analyze the probability of the so-called elimination paradox in presidential primary apportionment \cite{EliminationParadox,JonesMcCuneWilson_book} and to investigate the extent to which different apportionment methods systematically favor large or small states \cite{SchusterEtAl2003,DrtonSchwingenschloegl2004,SchwingenschloeglDrton2004}. Thus, to determine the correct formula, we revisit his overlapping region technique and obtain the following theorem.

\begin{theorem}\label{main_thm}
Let $H\ge 3$. Suppose that population proportions are uniformly distributed over the
apportionment simplex for three states. Then
\begin{equation*}
p(H)=
\begin{cases}
\displaystyle \frac{(H+3)(H-2)}{36H(H+1)}
& H\equiv 0,2\pmod{3},\\[8pt]
\displaystyle \frac{(H-1)(H+2)(H^2+H-4)}{36H^2(H+1)^2}
& H\equiv 1\pmod{3}.
\end{cases}
\end{equation*}
\end{theorem}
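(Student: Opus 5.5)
The plan is to redo Bradberry's overlap-region computation, but organized so that every overlap region is identified algebraically rather than read off a picture. \textbf{First} I will translate the paradox into fractional-part conditions. Write $q_i=p_iH$, $f_i=q_i-\lfloor q_i\rfloor$, and let $k=f_1+f_2+f_3\in\{1,2\}$ be the number of leftover seats (the case $k=0$ has measure zero). At House size $H+1$ the quotas become $q_i+p_i$. Since $p_i<1$, each floor rises by $0$ or $1$. State $i$ loses a seat exactly when three things hold:
\begin{itemize}
\item it is among the $k$ largest $f_j$ at size $H$;
\item $f_i+p_i<1$, so its floor does not rise;
\item it is not among the leftover winners at size $H+1$, where the leftover count is $k'=\sum_j\{f_j+p_j\}$ and the fractional parts are $\{f_j+p_j\}$.
\end{itemize}
I will record these conditions separately for $k=1$ and $k=2$. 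Each resulting region is an intersection of half-planes in $(p_1,p_2)$ whose boundary lines come from the $H$-grid, the $H+1$-grid, and the lines $p_i=p_j$ used by the ranking. So every overlap region is a convex polygon whose vertices have denominators dividing $H(H+1)$ or $3H(H+1)$.

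\textbf{Second}, I will use the $S_3$-symmetry already noted in the paper. I restrict to the fundamental sixth $p_1\ge p_2\ge p_3$, as in Figure \ref{figure_AL_10}, and multiply the resulting area by $6$. I then index the overlap regions by the lattice point $\mathbf a$ whose apportionment region (for $H$) they lie in. In the interior of the fundamental sixth, each hexagon of the $H$-diagram meets the $(H+1)$-diagram in the same combinatorial way. Concretely, the losing state is the one that received the extra seat at $H$ but whose region boundary has moved past it at $H+1$, and the overlap is a small triangle.

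I will compute that triangle's area exactly as a function of $\mathbf a$. It depends on $\mathbf a$ because the shift between the two grids at the point $\mathbf p$ is $\mathbf p$ itself, which varies across the simplex. I then sum over all lattice points in the fundamental sixth, which is a double sum of a polynomial over a triangular index set. This interior sum should produce the leading $(H+3)(H-2)/(36H(H+1))$ form, and in particular the limit $1/36$ of \cite{JansonLinusson}, which serves as a check.

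\textbf{Third}, I will treat the exceptional regions separately and add their corrections.
\begin{itemize}
\item Regions along the simplex boundary, where $a_3=0$: here the hexagon is a half-hexagon, and the overlap triangle may be truncated or may not exist.
\item Regions along the symmetry lines $p_1=p_2$ and $p_2=p_3$: here the overlap triangles are cut in half and must not be double counted.
\item The region at the centroid $(1/3,1/3,1/3)$.
\end{itemize}
The residue of $H$ mod $3$ enters through this last item. When $3\mid H$ the centroid is a lattice point of the $H$-grid. When $H\equiv 2$, it is a lattice point of the $(H+1)$-grid. When $H\equiv 1$, it is a lattice point of neither grid, and the overlap polygons near the centroid are clipped by the symmetry lines at vertices with denominators involving $3H(H+1)$. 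That clipping is the plausible source of the extra factor structure $(H^2+H-4)/(H(H+1))$.

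I expect this third step to be the main obstacle. It requires enumerating exactly which overlap polygons exist near the boundary and the centre, and where their vertices lie, which is precisely where Bradberry's count evidently went wrong. To keep it honest, I will first work out $H=3,4,5,6,7$ completely by hand from the inequalities. I will then check the two formulas in the statement against those values and against Monte Carlo estimates before carrying out the general summation for each residue class.
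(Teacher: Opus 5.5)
\textbf{There is a genuine gap.} Your fractional-part reformulation in Step 1 is correct. However, the proposal never turns it into the one computation the theorem rests on, and the geometry you describe in its place is wrong. Take $\mathbf a$ with $a_3\ge 1$ and state 3 smallest. Intersect the nearest-neighbour strips of the size-$H$ hexagon of $\mathbf a$ with those of the size-$(H+1)$ hexagon of $\mathbf a'=(a_1+1,a_2+1,a_3-1)$. This gives exactly
\[
\frac{a_1-a_3+1}{H+1}\le p_1-p_3\le \frac{a_1-a_3+1}{H},\qquad
\frac{a_2-a_3+1}{H+1}\le p_2-p_3\le \frac{a_2-a_3+1}{H},
\]
and the $p_1-p_2$ conditions are then automatic. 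This region is a parallelogram cut out by two pairs of parallel lines, not a small triangle. In the coordinates $x=p_1-p_3$, $y=p_2-p_3$ (Jacobian $1/3$) it is a rectangle, so its area is $\frac{(a_1-a_3+1)(a_2-a_3+1)}{3H^2(H+1)^2}$. The whole proof is a sum of these areas, so this lemma is the missing core. Computing a triangle's area instead would give the wrong weight for every region.

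Your Step 3 corrections are largely phantom, and the mechanism you propose for the mod-3 dependence is not the real one.
\begin{itemize}
\item A seat can be lost only by the smallest state, and only if it holds at least one. So half-hexagons with $a_3=0$ contribute nothing.
\item The parallelograms above satisfy $p_3>0$ and $p_2-p_3\ge \frac{a_2-a_3+1}{H+1}>0$. They are never truncated by the simplex edge or cut by $p_2=p_3$.
\item The only line that cuts any of them is $p_1=p_2$. It bisects exactly those with $a_1=a_2$, whose rectangles are squares symmetric under $x\leftrightarrow y$. So there is no clipping near the centroid to analyse.
\end{itemize}
The residue of $H$ enters only through the range of summation. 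Write $a_3=t$, $a_1=t+i$, $a_2=t+j$ with $i+j=H-3t$. The identity $\sum_{i=0}^{k}(i+1)(k-i+1)=\binom{k+3}{3}$ then gives
\[
p(H)=\frac{2}{H^2(H+1)^2}\sum_{t=1}^{\lfloor H/3\rfloor}\binom{H-3t+3}{3},
\]
and the floor in the upper limit produces the case split. Your instinct that $H\equiv 1$ differs by a tiny term is right in size: the answer exceeds $\frac{(H+3)(H-2)}{36H(H+1)}$ by exactly $\frac{2}{9H^2(H+1)^2}$. But this is an artifact of that floor in the summation limit, not a clipped polygon. Finally, hand computations for $H\le 7$ and Monte Carlo can corroborate the formula. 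They cannot replace the general summation, which your plan defers.
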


Interestingly, the correct probability agrees with Bradberry's formula only when $H=3$, even though the only visualization of the Alabama paradox in his paper uses $H=4$.

Before diving into the proof we make two observations. First, in the three-state case it is known that only the smallest state can lose a seat when the House size increases (see \cite{JansonLinusson}, for example), even when the size of $H$ increases by more than one. The basic idea behind this observation is that when the House size increases, each state's quota increases by an amount proportional to its population. Thus, a larger state cannot lose a seat while a smaller state does not, since the larger state's quota increases by more. Working out the exact algebra is a fun exercise, left to the reader. As a result, in the proof we assume only the smallest state can lose a seat.

Second, it is well-known that when the House size increases by one, no state can gain more than one seat. This follows from Bradberry's geometric characterization of Hamilton's method: apportionment vectors are the closest lattice points to their corresponding quota vectors. We leave the short verification to the reader.

\begin{proof}
We prove the result in three steps. 

\begin{enumerate}
\item For a fixed apportionment vector $\mathbf{a}=(a_1, a_2, a_3)$, $a_3\ge 1$, with corresponding proportion vector $(p_1, p_2, p_3)$ in the $H$ diagram, we provide proportion-based inequalities which characterize the region where the apportionment regions for $\mathbf{a}$ and $(a_1+1, a_2+1, a_3-1)$ overlap.
\item Using the characterization from (1), we calculate the area of a given overlap region.
\item We sum the areas of the overlap regions and divide by the total area of the simplex to obtain our probability.
\end{enumerate}
Assume the smallest state is state 3 so that $p_3\le p_1,p_2$.

\textbf{Step 1}: Fix an apportionment $\mathbf{a}=(a_1,a_2,a_3)$ for $H$ seats, where $a_3\ge 1$. Suppose state 3 loses a seat when the House size increases to $H+1$. The corresponding apportionment must be $\mathbf{a}'=(a_1+1,a_2+1,a_3-1)$  under Hamilton's method since no state can gain more than one seat when the House size increases by one. We must find the area of the intersection of the regions corresponding to $\mathbf{a}$ and $\mathbf{a}'$, after scaling both apportionment diagrams to $\Delta^2$.

To describe this intersection, we exploit Bradberry's observation that a quota vector $\mathbf{q}$ is mapped to the closest apportionment vector under the standard Euclidean metric. Suppose $(p_1,p_2,p_3)$ is a proportion vector such that the corresponding quota vector $\mathbf{q}=(Hp_1,Hp_2,Hp_3)$ maps to $\mathbf{a}$. Since $\mathbf{a}$ is the closest apportionment vector to $\mathbf{q}$, it must be at least as close to $\mathbf{q}$ as its neighboring apportionment vector $\mathbf{b}=(a_1+1,a_2,a_3-1)$. Thus,
\[
||\mathbf{q}-\mathbf{a}||\le ||\mathbf{q}-\mathbf{b}||.
\]
Expanding and canceling common terms gives
\[
(q_1-a_1)^2+(q_3-a_3)^2
\le (q_1-a_1-1)^2+(q_3-a_3+1)^2,
\]
which simplifies to $q_1-q_3=H(p_1-p_3)\le a_1-a_3+1.$

Comparing $\mathbf{a}$ to the neighboring vector $(a_1-1,a_2,a_3+1)$ similarly gives
\[
H(p_1-p_3)\ge a_1-a_3-1.
\]
Combining these inequalities gives
\begin{equation}\label{eq:first_inequality}
\frac{a_1-a_3-1}{H}\le p_1-p_3\le
\frac{a_1-a_3+1}{H}.
\end{equation}

 Since $(a_1+1)-(a_3-1)=a_1-a_3+2,$
applying the same argument to $\mathbf{a'}$ in the $(H+1)$-seat simplex gives

\begin{equation}\label{eq:second_inequality}
\frac{a_1-a_3+1}{H+1}\le p_1-p_3\le
\frac{a_1-a_3+3}{H+1}.
\end{equation}
Intersecting the two intervals given by Inequalities \ref{eq:first_inequality} and \ref{eq:second_inequality} gives the following bounds for $p_1-p_3$:
\begin{equation}\label{first_bounds}
\frac{a_1-a_3+1}{H+1}\le p_1-p_3\le
\frac{a_1-a_3+1}{H}.\end{equation}

We obtained Inequality \ref{first_bounds} by analyzing the neighboring apportionment vector $(a_1+1, a_2, a_3-1)$. Repeating the same argument for the neighboring apportionment $(a_1, a_2+1, a_3-1)$ gives
\begin{equation}\label{second_bounds}
\frac{a_2-a_3+1}{H+1}\le p_2-p_3\le
\frac{a_2-a_3+1}{H}.\end{equation}

Inequalities \ref{first_bounds} and \ref{second_bounds} are necessary conditions for the overlap region to have nonzero area. To establish sufficiency, we must also consider the remaining nearest-neighbor conditions, which involve $p_1-p_2$. Since $p_1-p_2=(p_1-p_3)-(p_2-p_3)$, it is straightforward to verify that these conditions are already implied by our previous inequalities. Thus, an analysis of the remaining nearest apportionment neighbor imposes no additional restrictions, and Inequalities \ref{first_bounds} and \ref{second_bounds} completely characterize the overlap regions. Moreover, since $a_1,a_2\ge a_3\ge 1$, the resulting overlap regions lie entirely within the simplex and are not truncated by its boundary.

\textbf{Step 2}: Now that we have inequalities characterizing the overlap regions, we calculate their areas. To do this, we use a change of variables to transform the regions into rectangles, making the areas easy to calculate.

Define $x=p_1-p_3$ and $y=p_2-p_3$.
Since $p_1+p_2+p_3=1$, we have $p_3=(1-x-y)/3$, and consequently
\[
p_1=\frac{1+2x-y}{3}
\ \ \text{and} \ \ 
p_2=\frac{1-x+2y}{3}.
\]
The Jacobian determinant of this transformation is
\[
\left|
\frac{\partial(p_1,p_2)}{\partial(x,y)}
\right|
=
\begin{vmatrix}
2/3 & -1/3\\
-1/3 & 2/3
\end{vmatrix}
=\frac13.
\]

Thus, the area of a region in the unit simplex
is one-third of the area of its transformation in the $(x,y)$-plane. By Inequalities \ref{first_bounds} and
\ref{second_bounds}, under our change of variables the overlap region becomes
the rectangle
\[
\frac{a_1-a_3+1}{H+1}\le x\le\frac{a_1-a_3+1}{H},
\qquad
\frac{a_2-a_3+1}{H+1}\le y\le\frac{a_2-a_3+1}{H}.
\]
Its side lengths are
\[
\frac{a_1-a_3+1}{H(H+1)}
\qquad\text{and}\qquad
\frac{a_2-a_3+1}{H(H+1)}.
\]
Therefore, the area of the corresponding
overlap region in the $(p_1,p_2)$-plane is
\begin{equation}\label{eq:parallelogram_area}
A_{\mathbf a}
=
\frac{(a_1-a_3+1)(a_2-a_3+1)}
{3H^2(H+1)^2}.
\end{equation}

\textbf{Step 3:} We now sum the areas of the overlap
regions and divide by the area of the simplex.
Since state 3 is the smallest state, write
\[
a_3=t,\qquad a_1=t+i,\qquad a_2=t+j,
\]
where $1\le t\le\lfloor H/3\rfloor$ and
$i,j\ge 0$ satisfy $i+j=H-3t$.

By Equation \ref{eq:parallelogram_area}, the total
area corresponding to state 3 losing a seat is
\[
\frac{1}{3H^2(H+1)^2}
\sum_{t=1}^{\lfloor H/3\rfloor}
\sum_{i=0}^{H-3t}
(i+1)(H-3t-i+1)
=
\frac{1}{3H^2(H+1)^2}
\sum_{t=1}^{\lfloor H/3\rfloor}
\binom{H-3t+3}{3},
\]
where the equality comes from using the identity
$\sum_{i=0}^{k}(i+1)(k-i+1)=\binom{k+3}{3}$.

By symmetry, the total area of all paradox regions
is three times this quantity (multiplication by three accounts for the other cases where state 1 or state 2 is smallest). Since the unit
simplex has area $1/2$, the desired probability is
\begin{equation}\label{eq:probability_sum}
p(H)=
\frac{2}{H^2(H+1)^2}
\sum_{t=1}^{\lfloor H/3\rfloor}
\binom{H-3t+3}{3}.
\end{equation}

Writing $H=3r+s$, where $s\in\{0,1,2\}$,
the sum becomes
\[
\sum_{t=1}^{\lfloor H/3\rfloor}
\binom{H-3t+3}{3}
=
\begin{cases}
\displaystyle
\frac{r(r+1)(3r-2)(3r+1)}{8},
& s=0,\\[8pt]
\displaystyle
\frac{r(r+1)(9r^2+9r-2)}{8},
& s=1,\\[8pt]
\displaystyle
\frac{r(r+1)(3r+2)(3r+5)}{8},
& s=2.
\end{cases}
\]
Substituting into Equation \ref{eq:probability_sum}
and simplifying gives the result. \end{proof}

\section{Bradberry's Error}

Where did Bradberry go wrong? How is his limiting probability off by a factor of 3, and how does his formula not have any congruence modulo 3?

It is impossible to know the source of his error with certainty because his proof is essentially just one sentence. Before stating his theorem, he says, ``It is straightforward, although tedious, to calculate the areas of these $[$overlap$]$ regions.'' He does not present a formula for a given region's area, he does not provide the bounds on the rectangle after the change of variables, etc. However, the lack of details presents a fun challenge, and often figuring out others' mistakes can be mathematically enlightening. So let's investigate. 

Initially, because I was examining the case of $H\rightarrow \infty$, I thought he had just made an accounting error and not divided by  3 at some point. That is, initially I thought his formula was correct if we replace a 12 with a 36 in Equation \ref{eq:probability}. This would be the case if he forgot to adjust by the Jacobian of $1/3$ under the change of variables, for example (a mistake which might resonate with many multivariable calculus students). Of course, Theorem \ref{main_thm} shows this is wrong. 

I then thought that maybe he was misled by the image in the right of Figure \ref{figure:first_HAM} into thinking that all of the overlapping parallelograms have the same area.  If he used an area of $\frac{1}{36H(H+1)}$ for each overlap region in the unit simplex then the summation would yield his formula.  However, I find this explanation unlikely. Given Bradberry's familiarity with the geometry, surely he would know that for larger House sizes (such as $H=10$; see Figure  \ref{figure_AL_10}) the overlap regions in general do not have the same area.

After considering many other candidate explanations, I think the following error is most likely. Bradberry seems familiar enough with the geometry to know the correct area formula given in Equation \ref{eq:parallelogram_area}, and some of his exposition suggests he used the change of variables $x=p_1-p_3, \ y=p_2-p_3$  to calculate it (as we did). If this is the case, he must have incorrectly summed the areas. Recall that in our proof, in order to nicely write our summation we used $a_3=t$, $a_1=t+i$, and $a_2=t+j$. Furthermore, $1\le t\le\lfloor H/3\rfloor$ and
$i,j\ge 0$ satisfy $i+j=H-3t$. Why do we need the constraint $i+j=H-3t$? Since state 3 has the fewest seats, if we assign $t$ seats to state 3 then we must also assign $t$ seats to the other two states, leaving $H-3t$ seats remaining to assign. Thus, $i$ (respectively $j$) is the number of seats assigned to state 1 (respectively state 2) in excess of $t$.

Suppose Bradberry correctly calculated the individual parallelogram areas but made a mistake when indexing them. What if Bradberry treated $i$ and $j$ as though they indexed ordinary positive seat allocations, rather than the numbers of seats held by states 1 and 2 in excess of state 3? Since we are considering interior apportionment lattice points in the simplex, each state must receive at least one seat. Thus, he might have reserved $t$ seats for state 3, but only one seat each for states 1 and 2, leaving $H-t-2$ seats to distribute. This would lead to the incorrect constraint $i+j=H-t-2$, rather than $i+j=H-3t$. Under this mistaken constraint, $t$ would range from $1$ to $H-2$, rather than stopping at $\lfloor H/3\rfloor$. The resulting calculation would be
\[
\begin{aligned}
p(H)
&=\frac{2}{H^2(H+1)^2}
\sum_{t=1}^{H-2}
\sum_{i=0}^{H-t-2}
(i+1)(H-t-i-1)\\
&=\frac{2}{H^2(H+1)^2}
\sum_{t=1}^{H-2}
\binom{H-t+1}{3}\\
&=\frac{(H-1)(H-2)}{12H(H+1)}.
\end{aligned}
\]
Thus, this mistake would cause Bradberry to count many nonexistent parallelograms while still using the correct formula for their supposed areas, resulting in his formula from Equation \ref{eq:probability}. In particular, this mistake would explain why his theorem does not depend on congruence modulo 3. Ultimately, I can't prove this is what he did, but the error is natural and does recover his theorem.


\section{Larger House size increases}

Previously we used the geometry of overlapping apportionment regions to calculate the probability of the Alabama paradox. We now use the same geometry to investigate a different question: how much can the House size increase while still allowing the paradox to occur? Could a state lose a seat even if we double the size of the House, for example? 

\begin{example}
Suppose $p_1=p_2=0.4845$ and $p_3=0.031$. For a House size of $H=11$, the quotas are $q_1=q_2=5.3295$ and $q_3=0.341$, producing the Hamilton apportionment $(5,5,1)$. If we increase the House size to 20 seats, the quotas become $q_1=q_2=9.69$ and $q_3=0.62$, producing the Hamilton apportionment $(10,10,0)$.
\end{example}

Thus, a state can lose all of its representation even when the size of the House nearly doubles.

As a historical side note, the Alabama paradox can produce some rather strange behavior as the House size increases. A state's allocation can oscillate between two values, gaining and losing a seat repeatedly. This phenomenon inspired one of my favorite quotations about the mathematics of apportionment and the dastardly nature of the Alabama paradox.

Following the 1900 census, the House Select Committee on the Twelfth Census examined Hamilton apportionments for House sizes ranging from 350 to 400. Over this range, Maine's allocation repeatedly switched between three and four seats. The committee proposed fixing the House size at 357, which would give Maine only three seats. During the ensuing debate, Representative John Littlefield of Maine expressed a frustration with mathematics that might resonate with some students:

\begin{quote}
Maine loses on 382. She loses again with 386, and does not lose with 387 or 388. Then she loses again on 389 and 390 ... [I]t does seem as though mathematics and science had combined to make a shuttlecock and battledore of the state of Maine in connection with the scientific basis upon which this bill is presented ... In Maine comes and out Maine goes ... God help the state of Maine when mathematics reach for her and undertake to strike her down! \cite{BalinskiYoung}
\end{quote}

Returning to the mathematics (which hopefully won't strike us down), we turn to examining the largest increase in House size for which the Alabama paradox can occur with positive probability. Let $p_k(H)$ denote the probability that some state loses a seat when the House size increases from $H$ to $H+k$, assuming a uniform distribution on the  simplex. Thus, $p_1(H)=p(H)$, where $p(H)$ is the probability studied in Theorem 1. Define
\[
I_{\max}(H)=\max\{k\geq 1:p_k(H)>0\}.
\]

We obtain the following bound.

\begin{proposition}\label{prop:max_increase}
Let $H\geq 3$ and $S=3$. Then $I_{max}(H)\le H-2$. Furthermore, if $H$ is odd then $I_{max}(H)=H-2$.
\end{proposition}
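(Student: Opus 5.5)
The plan is to bound $k$ with the nearest-lattice-point description of Hamilton's method, generalizing Step 1 of the proof of Theorem \ref{main_thm} from the pair $(H,H+1)$ to $(H,H+k)$. For the upper bound, I use the earlier observation that only the smallest state can lose a seat, so assume state 3 is smallest. Let $\mathbf a=(a_1,a_2,a_3)$ be the Hamilton apportionment at $H$, with $a_3=t\ge 1$ and $t\le H/3$. Let $\mathbf a'$ be the apportionment at $H+k$, with $a_3'\le t-1$. Since $p_k(H)>0$ requires the two (scaled) apportionment regions to overlap in positive area, I may assume every inequality below is strict.

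With $x=p_1-p_3$ and $y=p_2-p_3$, the argument of Step 1 gives
\[
|Hx-(a_1-a_3)|<1,\quad |Hy-(a_2-a_3)|<1,
\]
\[
|(H+k)x-(a_1'-a_3')|<1,\quad |(H+k)y-(a_2'-a_3')|<1.
\]
Adding the two inequalities at each House size and using $a_1+a_2=H-t$ and $a_1'+a_2'=H+k-a_3'$ gives
\[
1-\frac{3a_3'+2}{H+k}<x+y<1-\frac{3t-2}{H}.
\]
Since $a_3'\le t-1$, this yields $(3t-2)(H+k)<(3t-1)H$, that is, $k<H/(3t-2)$. So $t=1$ is the only case allowing large $k$, and then $k<H$, i.e.\ $k\le H-1$.

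The main obstacle is removing the value $k=H-1$. For $t=1$ we have $a_3'=0$, so the state-3 condition at $H+k$ is a boundary (half-hexagon) condition. I would redo the estimate coordinatewise rather than only for the sum $x+y$. Set $d_1=a_1-1$ and $d_1'=a_1'$, and similarly for the second coordinate. The individual inequalities give
\[
\frac{d_1'-1}{H+k}<x<\frac{d_1+1}{H},
\]
and the analogous bounds for $y$, with $d_1'+d_2'=H+k$ and $d_1+d_2=H-3$. Combining these with the truncation constraint $p_3\ge 0$ at $H+k$ (equivalently $x+y\le 1$ there) and the integrality of $d_1,d_1'$, I expect $k=H-1$ to force some $x$-interval to be empty or degenerate. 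The reason is that then $H+k=2H-1$ is odd, and the required integer $d_1'$ cannot sit strictly inside the window of width less than one determined by $Hx\in(d_1-1,d_1+1)$. If this coordinatewise bookkeeping fails, I would fall back on the fractional-part description directly. State 3 must be rounded up at $H$, so its fractional part is at least $1/3$, i.e.\ $Hp_3\ge 1/3$; it must also not be rounded up at $2H-1$ while $(2H-1)p_3<1$. I would then check that these two conditions contradict $p_1,p_2\ge p_3$ except on a null set.

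For odd $H$, I generalize the paper's example with $H=11$ and $k=9$. Take $p_1=p_2=(1-p_3)/2$, with $p_3$ slightly above $1/(3H)$ so that $Hp_3$ is just over $1/3$. At $H$ the quotas of states 1 and 2 are $(H-1)/2+\tfrac12-Hp_3/2$, with fractional parts just under $1/3$. Hence state 3 wins the single leftover seat, and the apportionment is $((H-1)/2,(H-1)/2,1)$. At $2H-2$ the quotas of states 1 and 2 are $H-1-(H-1)p_3$, whose fractional parts are close to $2/3$, while state 3's quota $(2H-2)p_3$ is about $2/3$ but smaller. So the two leftover seats go to states 1 and 2, giving $(H-1,H-1,0)$. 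All inequalities involved are strict, so a small open neighborhood of this point also exhibits the loss, giving $p_{H-2}(H)>0$. Combined with the upper bound, $I_{\max}(H)=H-2$ for odd $H$.
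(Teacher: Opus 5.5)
Your reduction to $t=1$, $k\le H-1$ is correct. It is the paper's vertex-height argument in other coordinates, since $x+y=1-3p_3$. Your odd-$H$ construction is the paper's, with a useful addition: the inequalities are strict, so the loss holds on an open set and $p_{H-2}(H)>0$ genuinely follows.

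The gap is the case $t=1$, $k=H-1$. This is the only nontrivial part of the upper bound, and you offer only an expectation. The stated reason does not work. The window $Hx\in(d_1-1,d_1+1)$ has width $2$, not less than one. Moreover, no argument in the first coordinate alone can succeed: e.g.\ $d_1'=2d_1+2$ gives $\frac{d_1'-1}{2H-1}<\frac{d_1+1}{H}$ (this reduces to $d_1+1<H$), so the $x$-interval is nonempty.

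The fallback also fails if rounding-up at $H$ is used only as $Hp_3\ge 1/3$. Take $H=10$ and $\mathbf{q}'=(9.67,8.67,0.66)$ at $19$ seats, where Hamilton gives $(10,9,0)$. At $10$ seats the quotas are about $(5.09,4.56,0.35)$ and Hamilton gives $(5,5,0)$. Yet $Hp_3>1/3$, $a_3'=0$, $(2H-1)p_3<1$ and $p_1,p_2\ge p_3$ all hold on a neighborhood, so no contradiction can come from those conditions. What rules out the loss is that state 3 beats states 1 and 2 at $H$, which is exactly the information that makes the algebra messy.

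The missing idea is integrality in both coordinates simultaneously. Positive-area overlap forces both of your intervals $\frac{d_1'-1}{H+k}<x<\frac{d_1+1}{H}$ and $\frac{d_2'-1}{H+k}<y<\frac{d_2+1}{H}$ to be nonempty. Equivalently, the integers
\[
N_1=(H+k)(d_1+1)-H(d_1'-1),\qquad N_2=(H+k)(d_2+1)-H(d_2'-1)
\]
must both be at least $1$. But $d_1+d_2=H-3$ and $d_1'+d_2'=H+k$ give
\[
N_1+N_2=(H+k)(H-1)-H(H+k-2)=H-k .
\]
Hence $k\le H-2$ whenever $t=1$. In particular $k=H-1$ gives $N_1+N_2=1$, a contradiction, and parity plays no explicit role. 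Your $x+y$ step is just the real-valued statement $N_1+N_2>0$; the point is to exploit that each $N_i$ is separately an integer.

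This is the paper's key fact in your coordinates. Its odd numerator $(2H-1)(2a_2+1)-2Hb_2$ equals $N_2-N_1$. Its parallel-edges argument amounts to showing that $N_1,N_2\ge 1$ is impossible. With this step written out, your proof would be complete, and arguably cleaner than the paper's, since one integrality computation handles all $k\ge H-1$ when $t=1$.
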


Thus, we cannot double the size of the House and obtain a paradox, but for odd $H$ we can nearly double.

\begin{proof}
First we show that if $H$ is odd, there exists a proportion vector demonstrating the paradox when we increase from $H$ to $2H-2$ seats. Let $\epsilon>0$ such that $0<\epsilon<\frac{1}{3(H-1)}$. Consider the choice of proportions \[p_3=\frac{\epsilon+1/3}{H}, \ \ \  p_1=p_2=\frac{1-p_3}{2}.\] For a House size of $H$ the quotas are \[q_1=q_2=\frac{H-1}{2}+\frac{1}{3}-\frac{\epsilon}{2}, \ \ \ q_3=\frac{1}{3}+\epsilon\] and for a House size of $2H-2$ the quotas are \[q_1'=q_2'=H-\frac{4}{3}+\frac{1}{3H}-\left(1-\frac{1}{H}\right)\epsilon, \ \ \ q_3'=\frac{2}{3}-\frac{2}{3H}+\left(2-\frac{2}{H}\right)\epsilon.\]
Since $H$ is odd, the fractional part of $q_1$ and $q_2$ is $1/3-\epsilon/2$ and the fractional part of $q_3$ is $1/3+\epsilon$, and thus state 3 receives one seat when the House size is $H$.

To analyze the fractional parts of the quotas when the House size is $2H-2$, note that $q_1'=q_2'=H-1-q_3'/2$ and thus the fractional part for states 1 and 2 is $1-q_3'/2$ while the fractional part for state 3 is $q_3'$. Since $q_3'<2/3$, the fractional part for states 1 and 2 is larger than the fractional part for state 3, giving an allocation of $(H-1, H-1, 0)$, and state 3 loses its seat.

Now suppose $H\ge 3$ is arbitrary and let $k >H-2$. We show that no state loses a seat when increasing the House size from $H$ to $H+k$. The proof analyzes the apportionment diagrams after mapping them to 2-space so that we have concrete $x$ and $y$ coordinates for vertices of the hexagons.

We use the coordinate transformation
\[
x=p_2+\frac{p_3}{2},
\qquad
y=\frac{\sqrt{3}}{2}p_3,
\]
which is a standard mapping of the simplex to 2-space \cite{ChambersWallMcCune2024,CutoffParadox,JonesMcCuneWilson_book}. In particular, this mapping is used to create the images for this article. Because the mapping is standard, we do not fully justify each geometric claim about coordinates of points, or slopes and equations of lines. 

For an allocation $(a_1,a_2,a_3)$ with House size $H$
and an allocation $(b_1,b_2,b_3)$ with House size $H+k$,
the bottom vertex of the former hexagon and the top vertex
of the latter have heights
\[
y_{\mathrm{bottom}}=\frac{3a_3-2}{2\sqrt{3}H},
\ \ \ \text{and} \ \ \ 
y_{\mathrm{top}}=\frac{3b_3+2}{2\sqrt{3}(H+k)},
\]
respectively.

Suppose that $a_3\ge 2$ and $b_3<a_3$. Since
$b_3\le a_3-1$ and $k\ge H-1$, we have
\[
y_{\mathrm{top}}
\le \frac{3a_3-1}{2\sqrt{3}(2H-1)}
< \frac{3a_3-2}{2\sqrt{3}H}
=y_{\mathrm{bottom}}.
\]

In other words, the bottom vertex of the $(a_1, a_2, a_3)$ hexagon is higher than the top vertex of the $(b_1, b_2, b_3)$ hexagon. Thus in this case the two hexagons cannot have overlapping interiors, and the paradox is not possible. Consequently,
a loss of a seat is possible only if $a_3=1$ and $b_3=0$.

In this remaining case, the bottom vertex of the original
hexagon and the top vertex of the new hexagon have heights
\[
y_{\text{bottom}}=\frac{1}{2\sqrt{3}H} \ \ \ \text{and} \ \ \ 
y_{\text{top}}=\frac{1}{\sqrt{3}(H+k)},
\]
respectively.
If $k\ge H$, the latter is no greater than the former,
so the hexagons again cannot have overlapping interiors. Figure \ref{figure:Hamilton_H10_to_20} illustrates this observation for $H=k=10$. If $k=H$ then there are only boundary intersections with zero overlap area.

It therefore remains only to consider the case where $k=H-1$, $a_3=1$, and $b_3=0$. We must show that no hexagon with $a_3=1$ for House size $H$ has an interior intersection with a hexagon with $b_3=0$ for House size $2H-1$. In this case the bottom of the hexagon for $(a_1, a_2, 1)$ is lower than the top vertex of the half-hexagon for $(b_1, b_2, 0)$, so we can't simply use the vertex heights to justify a lack of intersection. The bottom of Figure \ref{figure:Hamilton_H10_to_20} illustrates this overlap in vertical ranges for $H=10$ using a blue strip: the bottom blue line occurs at the height of the bottom of the black hexagons, while the top blue line occurs at the height of the top of the red hexagons. Although the vertical ranges of these hexagons overlap, their horizontal positions and sloping boundaries do prevent an intersection.

To justify the lack of intersection in this case, consider the bottom vertex of a black hexagon (corresponding to $a_3=1$) and the top vertex of a red half-hexagon (corresponding to $b_3=0$). Their horizontal coordinates are
$x_B=(a_2+1/2)/H$ and $x_R=b_2/(2H-1)$, respectively.
We show that the red top vertex cannot lie inside
the black hexagon.

The vertical distance between these two vertices is
$1/(2\sqrt{3}H(2H-1))$. Since the lower edges of the
black hexagon have slopes $\pm 1/\sqrt{3}$, moving
upward from its bottom vertex to the height of the
red top vertex corresponds to moving horizontally
by $1/(2H(2H-1))$ along either edge. Thus, at this
height, the black hexagon extends horizontally from
$x_B-1/(2H(2H-1))$ to $x_B+1/(2H(2H-1))$.

However, the horizontal distance between the two
vertices satisfies
\[
|x_B-x_R|
=\frac{|(2H-1)(2a_2+1)-2Hb_2|}{2H(2H-1)}
\geq \frac{1}{2H(2H-1)},
\]
since the numerator is an odd integer. Therefore,
the red top vertex lies on or outside the boundary
of the black hexagon. Moreover, the sloping edges descending from the red top vertex are parallel to the lower edges of the black hexagon on the opposite side. Thus, as we move downward, the red half-hexagon cannot cross into the interior of the black hexagon.
\end{proof}

The geometry not only provides a proof, but also helps explain why other proof approaches can be difficult. Before developing this geometric proof, I attempted a purely algebraic approach by analyzing the fractional parts of the quotas after increasing the House size. The algebra worked out nicely for $k\geq H$, but was a huge mess for $k=H-1$. The geometry illustrates why: when $k=H-1$, the vertical ranges of the relevant hexagons overlap, whereas they don't for $k\geq H$.

\begin{figure}
 \centering   \includegraphics[width=0.75\textwidth]{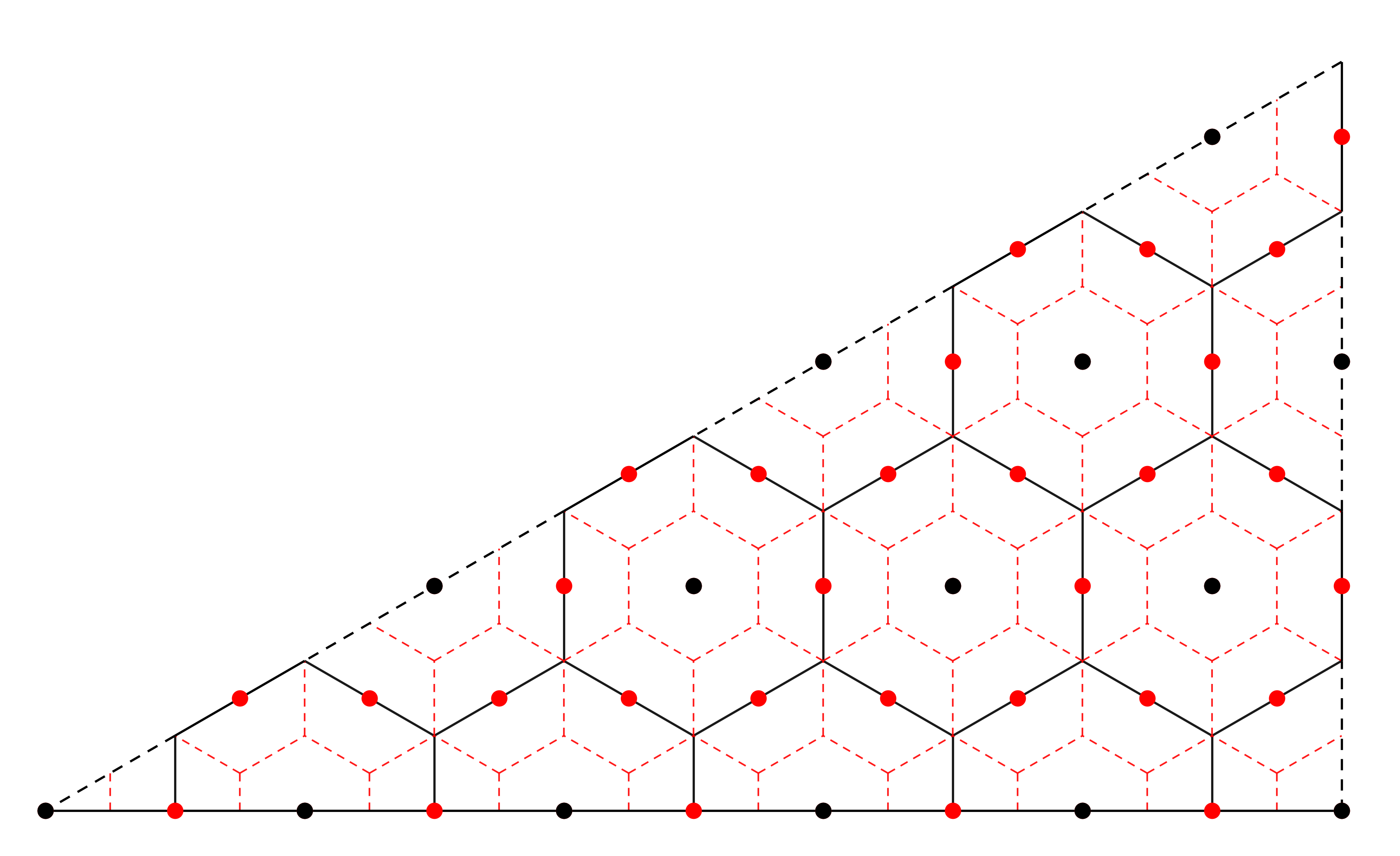}

 \vspace{5pt}
 
 \includegraphics[width=0.85\textwidth]{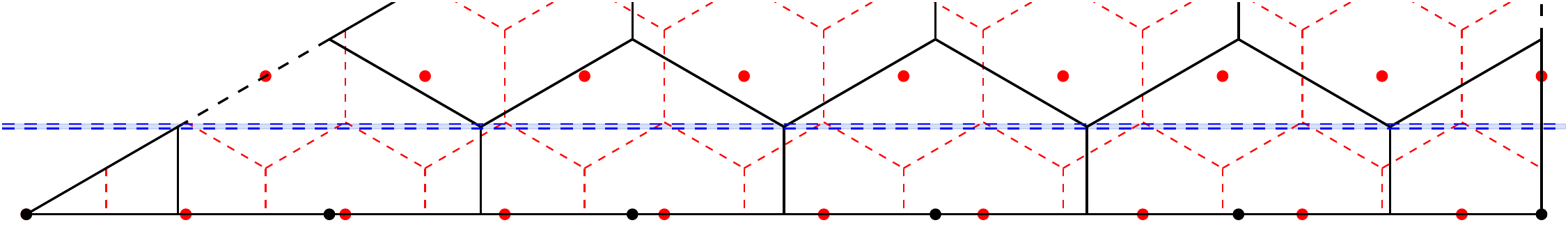}
    \caption{(Top) The Hamilton apportionment diagrams for $H=10$ (solid black lines) and $H=20$ (red dashed lines), where $p_1\ge p_2\ge p_3$. (Bottom) The lower piece of the apportionment diagram for $H=10$ and $H=19$. The blue strip shows the height range overlap between the lower row of red half-hexagons and the first row of full black hexagons.}
    \label{figure:Hamilton_H10_to_20}
\end{figure}

\section{Further Explorations}
Proposition \ref{prop:max_increase} leaves open the value of $I_{max}(H)$ when $H$ is even. I've experimented with several small even values of  $H$ and I'm unsure of the exact formula. The value of $I_{max}(H)$ tends to bounce between $H-3$, $H-4$, and $H-5$, and there seems to be some kind of dependence modulo 12. I have only considered even $H$ values up to 40, so any stab at a general formula is pure guesswork.

When I first started playing around with the $I_{max}(H)$ values, I used Python scripts to give me the exact overlap areas when increasing $H$ to $H+k$ for various $k$, thereby getting values of $p_k(H)$. I used the Python package Shapely because it can numerically calculate the areas of the overlap regions for a fixed $H$. For $H=10$, the script produced the following values: 

\begin{center}
\begin{tabular}{c|ccccc}
\hline
$k$ & 1 & 2 & 3 & 4 & 5\\
\hline
$p_k(H)$ & 2.6\% & 1.1\% & 0.7\% & 0.4\% & 0\%\\
\hline
\end{tabular}
\end{center}

Upon seeing the 0\% for $k=5$, I thought I had discovered $I_{max}(10)=4$. I then noticed that for other even values of $H$, the value $k=H/2$ produced a probability of zero, leading me to conjecture that $I_{max}(H)=H/2-1$ for even $H$. 

\begin{figure}
\centering    \includegraphics[width=0.75\textwidth]{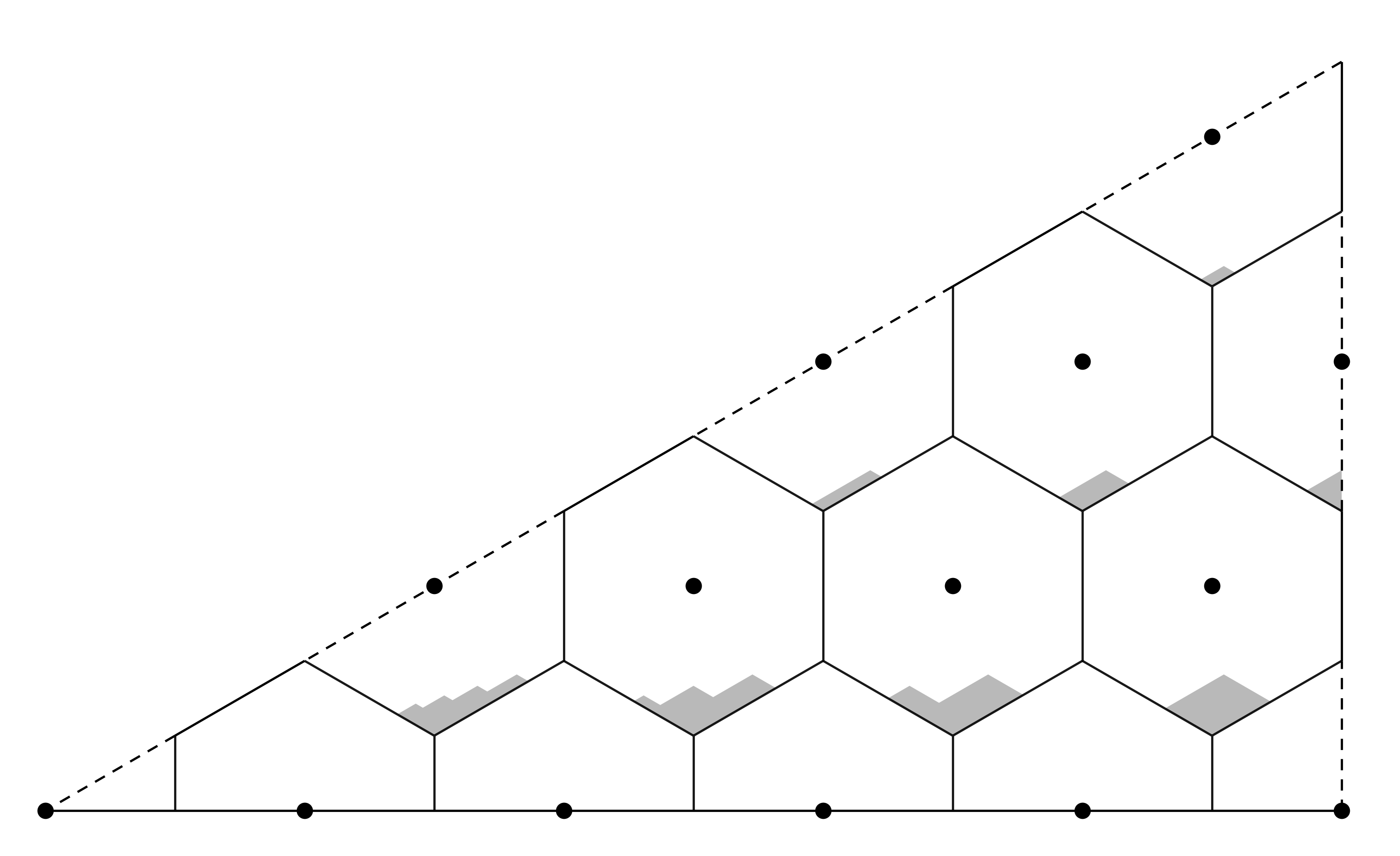}
    \caption{The part of the Hamilton apportionment diagram for $H=10$ where $p_1\ge p_2 \ge p_3$. The shaded regions represent proportion vectors where some increase in House size causes the smallest state to lose a seat.}
    \label{figure_AL_10_all_increases}
\end{figure}

However, when trying to understand better the structure of the diagrams for larger $k$ values, I produced overlap images for $H=10$ and $k\in\{6,7\}$. I was surprised to see probabilities of 0.06\% and 0.03\% for $k=6$ and $k=7$, respectively. Both of these probabilities are well-known to be greater than zero; I then realized I had been assuming $p_k(H)$ was a non-increasing function in $k$, and I had made this assumption without consciously expressing it. To me it seemed self-evident these probabilities would decrease because as $k$ increases, it becomes ``harder'' for the Alabama paradox to occur. There is a deep satisfaction, known to most mathematicians, in being surprised by a result (even when the surprise means we were initially wrong). After noticing the sometimes-increasing nature of $p_k(H)$, I played around with many values of $H$ and $k$, leading to the following conjecture:

\begin{conjecture}
If $H$ is odd then the sequence of probabilities $p_k(H)$ for $k\ge1$ is non-increasing. If $H$ is even then $p_{H/2}(H)=0$ while $p_{H/2+1}(H)>0$. In particular, when $H$ is even it seems that $p_k(H)$ decreases for $1\le k \le H/2$, then is decreasing again for $H/2+1\le k\le I_{max}(H)$, and thus experiences the only increase when moving from $k=H/2$ to $k=H/2+1$.
\end{conjecture}

Finally, there are other ways to define what it means for a proportion vector to demonstrate the Alabama paradox. The original definition is restrictive in that it only increases the House size by one. The most expansive definition would be to say a vector demonstrates the paradox if some increase of House size causes a state to lose  a seat. Let $p_{total}(H)$ denote the probability that there exists a House size $H'>H$ for which a state loses a seat when moving from $H$ to $H'$ seats for a randomly chosen proportion vector. An analysis of this probability would embrace the broadest possible understanding of what it means for a proportion vector to demonstrate the paradox. 

The geometry suggests that a formula for $p_{total}(H)$ would be messy and quite difficult to calculate. Figure \ref{figure_AL_10_all_increases} shows the shaded regions corresponding to this probability for $H=10$. The overlap regions are in general no longer nice, but instead sometimes have jagged ridges formed by the unions of parallelograms. Even though the geometry is daunting, perhaps there is some algebraic or geometric wizardry that could produce a closed formula in terms of $H$.

\section{Conclusion}

I love apportionment theory because at first glance it seems like  a simple problem of elementary-school rounding, but the mathematics runs much deeper, often in surprising ways. Ultimately, I think the lesson of this article is something I constantly stress to my wife: increasing the size doesn't always leave everyone satisfied, so perhaps we should just appreciate the size we have, which is perfectly adequate.


%

\section*{Acknowledgments} Thanks to Courtney Gibbons for making me think about apportionment again. 

\bibliographystyle{vancouver}
\bibliography{VancouverExamples}

\begin{abstract}
The Alabama paradox occurs when increasing the total number of seats in a legislature causes a state to lose a seat. We revisit a geometric analysis of this paradox published by Brent Bradberry in 1992, correct an erroneous probability formula, investigate the source of the error, and explore further consequences of the geometry.
\end{abstract}


\end{document}